\def\a{\alpha}               \def\g{\gamma}
\def\d{\delta}       \def\la{\lambda}     \def\om{\omega}
       \def\t{\theta}       \def\f{\phi}
         \def\r{\rho}         \def\z{\zeta}
\def\ch{\chi}        \def\o{\omega}
\def\e{\varepsilon}       
       \def\De{{\Delta}}    \def\O{\Omega}
\def\D{{\mathbb D}}     \def\T{{\mathbb T}}
\def\C{{\mathbb C}}     \def\N{{\mathbb N}}
\def\ch{{\mathcal H}}   
\def\cl{{\mathcal L}}   
\def\cp{{\mathcal P}}   
\def\({\left(}       \def\){\right)}
\DeclareMathOperator{\esssup}{ess\,sup}
\newtheorem{prop}{\sc Proposition}
\newtheorem{thm}[prop]{\sc Theorem}
\newtheorem{cor}[prop]{\sc Corollary}
\newtheorem{ex}[prop]{\sc Example}
\begin{document}
\title[Weighted compositions preserving the Carath\'eodory class]{On weighted compositions preserving the Carath\'eodory class}
\author[I. Ar\'evalo]{Irina Ar\'evalo}
\address{Departamento de Matem\'aticas, Universidad Aut\'onoma de
Madrid, 28049 Madrid, Spain}
\email{irina.arevalo@uam.es}
\author[R. Hern\'andez]{Rodrigo Hern\'andez}
\address{Universidad Adolfo Ib\'a\~nez, Facultad de Ingenier\'{\i}a y Ciencias, Av. Padre Hurtado 750, Vi\~na del Mar, Chile}
\email{rodrigo.hernandez@uai.cl}
\author[M.J. Mart\'{\i}n]{Mar\'{\i}a J. Mart\'{\i}n}
\address{University of Eastern Finland, Department of Physics and Mathematics, P.O. Box 111, 80101 Joensuu, Finland}
\email{maria.martin@uef.fi}
\author[D. Vukoti\'c]{Dragan Vukoti\'c}
\address{Departamento de Matem\'aticas, Universidad Aut\'onoma de
Madrid, 28049 Madrid, Spain} \email{dragan.vukotic@uam.es}
\thanks{Arévalo, Mart\'in, and Vukoti\'c are supported by MTM2015-65792-P from MINECO and FEDER/EU and partially by the Thematic Research Network MTM2015-69323-REDT, MINECO, Spain. Hernández and Martín are supported by FONDECYT 1150284, Chile. Martín is also supported by Academy of Finland Grant 268009.}
\subjclass[2010]{30C45, 47B33}
\date{17 February, 2017.}
\begin{abstract}
We characterize in various ways the weighted composition transformations which preserve the class $\cp$ of normalized analytic functions in the disk with positive real part. We analyze the meaning of the criteria obtained for various special cases of symbols and identify the fixed points of such transformations.
\end{abstract}
\maketitle
\section{Introduction}
 \label{sec-intro}

\subsection{The class $\cp$ and its properties}
 \label{subsec-class-p}
Let $\D$ denote the unit disk in the complex plane and $\ch(\D)$ the algebra of
all functions analytic in $\D$. If $\f\in\ch(\D)$ and $\f(\D)\subset \D$, we will say that $\f$ is an \textit{analytic self-map\/} of the disk. If, moreover, such $\f$ satisfies $\f(0)=0$, we will refer to it as a \textit{Schwarz-type function\/} (as in the classical Schwarz lemma).
\par
Denote by $\cp$ the Carath\'eodory class of all $f$ in $\ch(\D)$ with positive
real part and normalized so that $f(0)=1$. An important example of a function in this class is the so-called \textit{half-plane mapping\/} $\ell$ given by
$$
 \ell(z)=\frac{1+z}{1-z}\,, \qquad z\in\D\,.
$$
This conformal map of the disk onto the right half-plane is extremal in many senses for the class $\cp$. This is manifested, for example, by the growth theorem for the functions in the class:
$$
  \ell(-|z|) = \frac{1-|z|}{1+|z|} \le |f(z)|\le \frac{1+|z|}{1-|z|} = \ell(|z|) \,.
$$
The above estimate \cite[Section~2.1]{Pb} is a direct corollary of the Schwarz lemma and the elementary subordination principle since every function $f$ in $\cp$ is of the form $\ell\circ\om$, where $\om$ is some Schwarz-type function. In the particular case when $\om(z)=\lambda z$ with $|\lambda|=1$, we will use the symbol $\ell_\lambda$ to denote the functions $\ell\circ \om$; that is, $\ell_\la (z) = (1+\la z)/(1-\la z)$. In view of the Herglotz representation theorem \cite[Chapter~1]{D2}, the class $\cp$ equals  $\overline{\textrm{co\,} (\cl)}$, the closed convex hull of the collection  $\cl= \{\ell_\la\, \colon\,|\la|=1\}$ in the topology of uniform convergence on compact subsets of $\D$ (the compact-open topology). Every Schwarz-type function has radial limits almost everywhere on the unit circle $\T$ with respect to the normalized arc length measure $dm=d\t/(2\pi)$ (see \cite[Chapter~1]{D1}), hence so does every $f$ in $\cp$.

\subsection{On weighted composition transformations}
 \label{subsec-wct}
Whenever $\f$ is an analytic self-map of the disk, it is immediate that $f\circ\f\in\cp$ for every $f\in \cp$ if and only if $\f$ is a Schwarz-type function. Thus, it makes sense to consider the \textit{composition transformation\/} $C_\f$ on $\cp$ defined by the formula $C_\f(f)=f\circ\f$. For the theory of composition operators on Banach or Hilbert spaces of analytic functions, we refer the reader to \cite{CM} or \cite{S}.
\par
It also seems reasonable to consider the \textit{multiplication  transformations\/} on $\cp$, given by $M_F(f)=F f$, where $F\in\ch(\D)$. While there are many cases of such transformations in spaces of analytic functions, it turns out that such mapping can preserve the class $\cp$ only in the trivial case when $F\equiv 1$.
\par
One can consider the more general \textit{weighted composition transformations\/} $T_{F,\f}$ on $\cp$, defined by a composition followed by a multiplication: $T_{F,\f}(f)=F (f\circ\f)$, where $F$, $\f\in\ch(\D)$ and $\f(\D)\subset\D$, and ask whether such a transformation \textit{acts\/} on $\cp$ or \textit{preserves\/} $\cp$ in the sense that $T_{F,\f}(f) \in\cp$ for every $f$ in $\cp$. This will make sense more often since one of the two symbols $F$ and $\f$ can compensate for the behavior of the other. Also, the nonlinearity of such transformations in our context makes the analysis non-trivial. \par
The linear weighted composition operators have been studied in a number of papers in the context of Banach spaces of analytic functions. The earliest references on this type of operators are \cite{Ka} and \cite{Ki}; among the numerous more recent references, we mention \cite{AL}, \cite{CH}, or \cite{CZ}. Although the class $\cp$ has no linear space structure, studying the question of when a weighted composition transformation preserves this class should be of interest as it would indicate the degree of rigidity when trying to produce new functions with positive real part from the given ones by composing and multiplying. Moreover, the knowledge of weighted composition transformations that preserve the class $\cp$ could even have some applications to variational methods for solving non-linear extremal problems in geometric function theory.
\par
\subsection{Summary of main results}
 \label{subsec-descr-main-results}
The question considered in the present paper is to characterize the  ordered pairs of functions $(F,\f)$ for which $F (f\circ\f)\in\cp$ whenever $f\in\cp$. We solve this problem completely by exhibiting explicit conditions of geometric and analytic character which are equivalent to this property and are relatively easy  to check in practice. The precise statement is formulated as Theorem~\ref{thm-main}.
\par
Although this result is conclusive, the work does not end here by any means. In some situations the theorem allows us to deduce the following rigidity principle: if $\f$ is of certain type, then the only $F$ possible in the trivial case: $F\equiv 1$; see Proposition~\ref{prop-fi-inner}. Alternatively, when $F$ is of special type then $\f$ must be unique; \textit{cf\/.} Proposition~\ref{prop-om-inner}.
\par
Another issue of interest is to construct non-trivial examples and to interpret the statement of Theorem~\ref{thm-main} for some specific types of maps $F$ and $\f$ in terms of their qualitative behavior (image compactly contained in $\D$, radial limits, angular derivatives, etc.). Roughly speaking, one could ask how much we can push the conditions for these maps to the limit and when one of the symbols is ``good'' in some sense, how ``bad'' can the other one be. Thus, various statements such as propositions~\ref{prop-comp-cont} and \ref{prop-comp-cont-dual} and Theorem~\ref{thm-ang-deriv} seem to require a more intricate analysis and may be as interesting as the characterization itself. Different relevant examples are also included.
\par
We end the paper by showing that every transformation $T_{F,\f}$ that preserves class $\cp$ has a unique fixed point whenever $\f$ is not a rotation and this fixed point is obtained by applying the iterates of $T_{F,\f}$ to an arbitrary function in $\cp$, as one would expect. This is the content of Theorem~\ref{thm-fixed-pts}. In the case when $\f$ is a rotation, one can easily describe all fixed points. Even though the transformation is non-linear, here one can adapt the methods from linear functional analysis used in the study of composition operators.

\section{Characterizations of the $\cp$-preserving weighted compositions}
 \label{sect-wco-p}

\subsection{Initial observations}
 \label{subsec-init-obs}
We begin by recording the obvious necessary conditions that must be satisfied by all \textit{admissible symbols\/} $F$ and $\f$, \textit{i.e.}, by those which satisfy the initial assumptions $F$, $\f\in\ch(\D)$, $\f(\D)\subset\D$ and make the inclusion $T_{F,\f}(\cp)\subset\cp$ possible.
\par
$\bullet$ If $F (f\circ\f)\in\cp$ for all $f$ in $\cp$, it is clear that then $F\in\cp$; this is immediately seen by choosing $f\equiv 1$, which is a function in $\cp$.
\par
$\bullet$ In addition to this, $\f$ must be a Schwarz-type function since, after choosing $f(z)=1+z$, another function obviously in $\cp$, we get
$$
 1 = F(0) f(\f(0)) = 1+ \f(0)\,,
$$
hence $\f(0)=0$.
\par
Thus, from now on we shall always work assuming these hypotheses: $F\in\cp$ and $\f$ is a Schwarz-type function.
\par\smallskip
It is quite easy to establish the lack of non-trivial pointwise multipliers of $\cp$.
\begin{prop} \label{prop-no-mult}
If $T_{F,\f}$ preserves $\cp$ and $\f(z)=z$ (that is, $F\,f\in \cp$ for all $f$ in $\cp$) then $F\equiv 1$.
\end{prop}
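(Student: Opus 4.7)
The plan is to reduce the statement to the maximum modulus principle by extracting a pointwise modulus bound on $F$ from the growth estimate for $\cp$ evaluated against the extremal functions $\ell_\la$. The initial observations already provide $F\in\cp$, so in particular $F(0)=1$, and only this pointwise bound on $|F|$ is needed.

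The main step will go as follows. For each $\la$ with $|\la|=1$, the function $\ell_\la$ lies in $\cp$, so by hypothesis $g_\la:=F\cdot\ell_\la$ also belongs to $\cp$. Applying the growth theorem recalled in Subsection~\ref{subsec-class-p} to $g_\la$ will yield
$$
 |F(z)|\,|\ell_\la(z)|=|g_\la(z)|\leq \frac{1+|z|}{1-|z|}
$$
for every $z\in\D$. I would then evaluate at $z=r\bar\la$ with $r\in[0,1)$: here $\la z=r$, so $\ell_\la(r\bar\la)=(1+r)/(1-r)$, and the inequality collapses to $|F(r\bar\la)|\leq 1$. As $\la$ ranges over the unit circle and $r$ over $[0,1)$, the points $r\bar\la$ exhaust $\D$, which gives $|F(z)|\leq 1$ for every $z\in\D$.

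Combined with $F(0)=1$, the maximum modulus principle will then force $F\equiv 1$. No real obstacle is to be expected in this argument; the single point worth emphasising is that the growth estimate for $\cp$ is saturated precisely along each radius $\{r\bar\la:r\in[0,1)\}$ by the specific extremal $\ell_\la$, which is exactly what turns a bound on $|F\cdot\ell_\la|$ into a bound on $|F|$ alone at the point $r\bar\la$.
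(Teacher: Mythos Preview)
Your argument is correct and is essentially the paper's own proof: both apply the growth theorem for $\cp$ to $F\cdot\ell_\la$, use that $\ell_\la$ saturates the bound $|f(z)|=(1+|z|)/(1-|z|)$ along the ray $z=r\bar\la$, deduce $|F|\le 1$ on $\D$, and conclude via $F(0)=1$ and the maximum modulus principle. The only cosmetic difference is the order of quantification---the paper fixes $z$ and then picks the appropriate $\la$, whereas you fix $\la$ and evaluate at $z=r\bar\la$---but the content is identical.
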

\begin{proof}
Indeed, since then $F f\in\cp$, by the growth theorem for the functions in $\cp$ we have
\[
 |F(z)|\cdot |f(z)| \leq \frac{1+|z|}{1-|z|}
\]
for all $z$ in $\D$. Also, for any fixed $z$ we may choose $f$ to be a suitable rotation of the half-plane function for which
\[
 |f(z)| = \frac{1+|z|}{1-|z|}\,.
\]
It follows that $|F(z)|\le 1$ for all $z\in\D$. Since $F(0)=1$, the maximum modulus principle implies that $F$ is identically constant, hence $F\equiv 1$.
\end{proof}

\subsection{The main theorem}
 \label{subsec-main-res}
We now characterize in different ways all admissible pairs of symbols. Note that condition (b) below simply states that it suffices to test the action of $T_{F,\f}$ on the set $\cl=\{\ell_\la\,\colon\,|\la|=1\}$ of all rotations of the half-plane function in order to know whether the transformation preserves $\cp$. Condition (c) gives an effective analytic way of testing if a symbol is  admissible or not while (d) provides conditions of geometric type. Each can be useful in its own way.
\par
Throughout the paper, we shall consider the principal branch of the argument function with  values in $(-\pi,\pi]$. Note that for any function $f$ in $\cp$, the function $\arg f$ takes on the values only in $(-\pi/2,\pi/2)$ and is a continuous function in the disk. Moreover, the argument of the product of two such functions $f$ and $g$ in $\cp$,   with values in $(-\pi,\pi)$, is still continuous and the formula
$$
 \mathrm{arg\,}(fg) = \mathrm{arg\,}f + \mathrm{arg\,}g
$$
holds throughout $\D$. We will use this fact repeatedly.
\begin{thm} \label{thm-main}
Let $\f$ be a Schwarz-type function, $F\in\cp$, and denote by $\,\om$ the Schwarz-type function for which $F=\ell\circ\om$. Consider the argument function defined as above. Then the following conditions are equivalent:
\begin{itemize}
 \item[(a)]
$T_{F,\f}(\cp)\subset\cp$.
 \item[(b)]
$T_{F,\f}(\cl)\subset\cp$.
 \item[(c)]
The inequality
\begin{equation}
 4 |\f(z)|\cdot |\mathrm{Im\,}\om(z)| < (1-|\om(z)|^2) (1-|\f(z)|^2)
 \label{key-modif}
\end{equation}
holds for all $z$ in $\D$. In other words,
\begin{equation}
 2 |\f(z)|\cdot \left|\frac{\mathrm{Im\,}F(z)}{\mathrm{Re\,}F(z)} \right| < 1-|\f(z)|^2\,, \quad \mathrm{for \ all \ } z\in\D\,.
 \label{key-F}
\end{equation}
 \item[(d)]
The inequality
\begin{equation}
 |\arg F(z)| < \frac{\pi}2 - \arcsin \frac{2 |\f(z)|}{1+|\f(z)|^2}
 \label{ineq-arg}
\end{equation}
holds for all $z$ in $\D$. Note also that
\begin{equation}
 \frac{\pi}2 - \arcsin \frac{2 |\f(z)|}{1+|\f(z)|^2} = \frac{\pi}2 - \arctan \frac{2 |\f(z)|}{1-|\f(z)|^2} = \arctan \frac{1 - |\f(z)|^2}{2 |\f(z)|}\,,
 \label{eq-trig}
\end{equation}
where in the case when $z=0$ (recalling that $\f(0)=0$) the last equality should be understood as the limit:  $\arctan(+\infty)=\frac{\pi}{2}$.
\end{itemize}
\end{thm}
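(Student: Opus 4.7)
The plan is to verify the chain $(\mathrm{a}) \Leftrightarrow (\mathrm{b}) \Leftrightarrow (\mathrm{d}) \Leftrightarrow (\mathrm{c})$, so that $(\mathrm{a}) \Rightarrow (\mathrm{b})$ is immediate (since $\cl \subset \cp$) and the real content is divided between a convexity argument on one side and a geometric computation on the other.

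For $(\mathrm{b}) \Rightarrow (\mathrm{a})$, the key observation is that $T_{F,\f}$ is affine and continuous in the compact-open topology: for $t \in [0,1]$ one has $T_{F,\f}(tf + (1-t)g) = t T_{F,\f}(f) + (1-t) T_{F,\f}(g)$ by distributivity of pointwise multiplication over pointwise addition. Assuming (b), convex combinations of the $T_{F,\f}(\ell_\la)$ stay in $\cp$ because $\cp$ is convex, and limits of such combinations remain in $\cp$ because $\cp$ is closed in the compact-open topology. Since the introduction recalls $\cp = \overline{\mathrm{co}\,\cl}$, this transfers (b) to every $f \in \cp$.

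The main content is $(\mathrm{b}) \Leftrightarrow (\mathrm{d})$. Fix $z_0 \in \D$ and write $\zeta = F(z_0)$, $r = |\f(z_0)| \in [0,1)$. Condition (b) evaluated at $z_0$ says $\mathrm{Re}\,(\zeta \cdot \ell_\la(\f(z_0))) > 0$ for every $\la \in \T$. If $r = 0$, this reduces to $\mathrm{Re}\,\zeta > 0$, which holds by hypothesis, and (d) is also trivially satisfied at $z_0$. If $r > 0$, then as $\la$ runs over $\T$ the point $\la \f(z_0)$ traces the circle $|w| = r$, whose image under $\ell$ is, by a direct Möbius computation, the circle $C_r$ centered at $(1+r^2)/(1-r^2)$ with radius $2r/(1-r^2)$. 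The product $\zeta \cdot C_r$ is again a circle and lies in the open right half-plane iff the real part of its center exceeds its radius, which after dividing by $|\zeta|$ simplifies to $\cos(\arg \zeta) > 2r/(1+r^2)$. Since $\arg \zeta \in (-\pi/2, \pi/2)$, this is precisely $|\arg F(z_0)| < \pi/2 - \arcsin(2r/(1+r^2))$, which is (d) at $z_0$.

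Finally, $(\mathrm{c}) \Leftrightarrow (\mathrm{d})$ is bookkeeping. From $F = \ell \circ \om$ one reads off $\mathrm{Im\,}F/\mathrm{Re\,}F = 2\mathrm{Im\,}\om/(1-|\om|^2)$, which turns (\ref{key-modif}) and (\ref{key-F}) into the same inequality. Writing $\alpha = |\arg F(z)|$ and $r = |\f(z)|$, (\ref{key-F}) reads $\tan \alpha < (1-r^2)/(2r)$, while the identity in (\ref{eq-trig}) (obtained by evaluating $\sin$ and $\cos$ at the angle $\arcsin(2r/(1+r^2))$) turns (\ref{ineq-arg}) into $\alpha < \arctan((1-r^2)/(2r))$; monotonicity of $\tan$ on $[0,\pi/2)$ closes the loop. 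The principal obstacle throughout is the geometric step in $(\mathrm{b}) \Leftrightarrow (\mathrm{d})$: one must correctly identify the extreme argument $\arcsin(2r/(1+r^2))$ at which $C_r$ is tangent to a line through the origin, and keep careful track of strict versus non-strict inequalities in the degenerate cases $\f(z_0) = 0$ and $\mathrm{Im\,}\om(z_0) = 0$.
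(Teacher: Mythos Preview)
Your proof is correct. The $(\mathrm{a})\Leftrightarrow(\mathrm{b})$ step and the $(\mathrm{c})\Leftrightarrow(\mathrm{d})$ bookkeeping match the paper's own argument. The genuine difference is in the central link: the paper proves $(\mathrm{b})\Leftrightarrow(\mathrm{c})$ algebraically, writing $F\cdot(\ell_\la\circ\f)=(1+\om_\la)/(1-\om_\la)$, solving $\om_\la=(\la\f+\om)/(1+\la\f\om)$, expanding $|\om_\la|^2<1$, and then optimising over $\la$ to reach \eqref{key-modif}. You instead prove $(\mathrm{b})\Leftrightarrow(\mathrm{d})$ geometrically: at a fixed $z_0$, as $\la$ runs over $\T$ the values $\ell_\la(\f(z_0))$ sweep the circle $C_r$ of centre $(1+r^2)/(1-r^2)$ and radius $2r/(1-r^2)$, $r=|\f(z_0)|$, and the rotated circle $F(z_0)\cdot C_r$ lies in the right half-plane precisely when $\cos(\arg F(z_0))>2r/(1+r^2)$. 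Your route is more visual and explains immediately why the quantity $\arcsin\bigl(2r/(1+r^2)\bigr)$ appears: it is the half-angle subtended by $C_r$ at the origin. The paper's computation keeps $\om$ in the foreground throughout and arrives first at \eqref{key-modif}, which is the form actually invoked in the later applications.

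One point to state explicitly in your write-up: when you speak of ``condition (b) evaluated at $z_0$'', you are silently using that analyticity and the normalisation $F(0)\,\ell_\la(\f(0))=1$ are automatic, so that $T_{F,\f}(\ell_\la)\in\cp$ is equivalent to the pointwise condition $\mathrm{Re}\bigl(F(z)\,\ell_\la(\f(z))\bigr)>0$ for every $z$; only then can the two universal quantifiers be swapped and the problem localised.
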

\par
It should be noted that in the above result the inequalities in conditions (c) and (d) are both invariant under rotations of $\f$ but not under the rotations in $\om$ (or under the appropriate changes in $F$).
\begin{proof}
We will show that (a) $\Leftrightarrow$ (b), (b) $\Leftrightarrow$ (c), and (c) $\Leftrightarrow$ (d).
\par\medskip
\fbox{(a) $\Leftrightarrow$ (b)}\,. The implication (a) $\Rightarrow$ (b) is obvious so we only have to see that (b) $\Rightarrow$ (a). First of all, the image of a convex combination of functions in $\cp$ under $T_{F,\f}$ is the same convex combination of their images and a convex combination of function in $\cp$ remains in $\cp$. Hence, if $T_{F,\f}(\cl)\subset\cp$ we also have that $T_{F,\f}(\textrm{co\,}(\cl)) = \textrm{co\,}T_{F,\f}(\cl) \subset\cp$ as $T_{F,\f}(\cl)\subset\cp$ by assumption and the class $\cp$ is clearly convex.
\par
Next, if $f_n\to f$ uniformly on compact subsets of $\D$, then also $F (f_n\circ\f) \to F (f\circ\f)$ in the same topology. Since $\cp$ is a compact family (in the classical terminology, meaning a closed set in the compact-open topology), we get $T_{F,\f}(\cp) = T_{F,\f}(\overline{\textrm{co\,}(\cl)}) = \overline{T_{F,\f} (\textrm{co\,}(\cl))} \subset\cp$.
\par\medskip
\fbox{(b) $\Leftrightarrow$ (c)}\,. To verify that \eqref{key-modif} is equivalent to \eqref{key-F}, one easily checks that if $F=\ell\circ\om$ then
$$
 \left|\frac{\mathrm{Im\,}F(z)}{\mathrm{Re\,}F(z)} \right| = 2 \frac{|\mathrm{Im\,}\om(z)|}{1-|\om(z)|^2}\,.
$$
\par
To see that (b) $\Rightarrow$ (c), suppose that $F (f\circ\f) \in \cp$ for all $f$ in $\cl$. In other words, $F (\ell_\la \circ\f) \in\cp$ for all $\la$ of modulus one and therefore also
$$
 F (\ell_\la \circ\f) = \frac{1+\om_\la}{1-\om_\la}
$$
for the Schwarz-type functions $\om_\la$ depending on each $\la$. This leads to the equation
$$
  \frac{1+\om}{1-\om}  \frac{1+\la \f}{1-\la \f} = \frac{1+\om_\la}{1-\om_\la}
$$
which holds in the entire unit disk. Solving for $\om_\la$, we get
$$
 \om_\la = \frac{\la\f+\om}{1+\la\f \om}\,.
$$
The condition $|\om_\la|<1$ in $\D$ is equivalent to
$$
 |\la\f+\om|^2<|1+\la\f \om|^2
$$
which amounts to the inequality
\begin{equation}
 |\f|^2 + |\om|^2 + 2 \mathrm{Re\,}\{\la\f \overline{\om}\} < 1 + |\f  \om|^2 + 2 \mathrm{Re\,}\{\la\f \om\}\,.
 \label{key-ineq}
\end{equation}
Grouping the terms in \eqref{key-ineq} we obtain
$$
 2 \mathrm{Re\,}\{\la \f(z) (\overline{\om (z)}-\om (z))\} < (1-|\om (z)|^2) (1-|\f(z)|^2)
$$
for each $z$ in $\D$ and for arbitrary $\la$ with $|\la|=1$. For each  point $z$ we can choose the argument of $\la$ appropriately so as to get
$$
 2 \mathrm{Re\,}\{\la \f(z) (\overline{\om (z)}-\om (z))\} = 4 |\f(z)|\cdot |\mathrm{Im\,}\om(z)|\,.
$$
Since this is valid at every point $z$ in the disk, the statement \eqref{key-modif} follows.
\par\medskip
To see that (c) $\Rightarrow$ (b), it suffices to observe that
$$
 2 \mathrm{Re\,}\{\la \f(z) (\overline{\om (z)}-\om (z))\} \le 4 |\f(z)|\cdot |\mathrm{Im\,}\om(z)|
$$
and it is now easy to reverse the steps in the above proof.
\par\medskip
\fbox{(c) $\Leftrightarrow$ (d)}\,. Since $F\in\cp$, we know that $|\arg F|<\pi/2$. Thus, inequality \eqref{key-F} is clearly equivalent to
$$
 2 |\f(z)|\cdot |\tan (\arg F(z))| < (1-|\f(z)|^2)\,, \qquad z\in\D\,,
$$
which is the same as
$$
 |\tan (\arg F(z))| < \frac{1-|\f(z)|^2}{2 |\f(z)|}\,, \qquad z\in\D\,,
$$
understanding the right-hand side as $+\infty$ when $\f(z)=0$. The inverse tangent function is odd so this is the same as
$$
 |\arg F(z)| < \arctan \frac{1-|\f(z)|^2}{2 |\f(z)|}\,, \qquad z\in\D\,.
$$
Equalities \eqref{eq-trig} follow by elementary trigonometry, so the proof is complete.
\end{proof}

\section{Some consequences and discussions}
 \label{sect-consequences}
\par
It should be stressed out that, even though our Theorem~\ref{thm-main} gives different characterizations of all admissible pairs of symbols, in some special situations the information given by the theorem can be made more precise. Actually, in some situations it may not be obvious  how many examples of admissible pairs can exist. The aim of this section is to explain what our main results amounts to in some important special situations.
\subsection{Some rigidity principles}
 \label{subsec-inner-comp}
\par
As is usual, for a bounded analytic function $\f$ in $\D$ we write $\|\f\|_\infty =\sup_{z\in\D}|\f(z)|=\esssup_{\z\in\T}|\f(\z)|$.
\par
Recall that bounded analytic functions in $\D$ have radial limits $\f(\z)=\lim_{r\to 1} \f(r \z)$ for almost every point $\z$ on the unit circle $\T$ with respect to the normalized Lebesgue arc length measure \cite[Chapter~1]{D1}. An analytic function $\f$ in the disk is called \textit{inner\/} if $|\f(z)|\le 1$ for all $z$ in $\D$ (equivalently, $\|\f\|_\infty\le 1$) and also $|\f (\z)|=1$ almost everywhere on $\T$. The following result generalizes our Proposition~\ref{prop-no-mult}.
\begin{prop} \label{prop-fi-inner}
Let $F\in\cp$ and let $\f$ be inner. Then $T_{F,\f}(\cp) \subset\cp$ if and only if \,$F\equiv 1$.
\end{prop}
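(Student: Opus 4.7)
The ``if'' direction is immediate: with $F\equiv 1$ the transformation $T_{F,\f}$ reduces to the composition operator $C_\f$, and since $\f$ is a Schwarz-type function (inherited from the admissibility observations at the start of Section~2), the subordination argument in Subsection~1.2 gives $C_\f(\cp)\subset\cp$. So the content lies entirely in the ``only if'' direction, which I would attack by combining the analytic criterion of Theorem~\ref{thm-main}(c) with the boundary behaviour of an inner function.

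Write $F=\ell\circ\om$ with $\om$ a Schwarz-type function, and assume $T_{F,\f}(\cp)\subset\cp$. By condition (c) of Theorem~\ref{thm-main},
$$
4|\f(z)|\cdot|\mathrm{Im\,}\om(z)| < (1-|\om(z)|^2)(1-|\f(z)|^2) \leq 1-|\f(z)|^2, \qquad z\in\D,
$$
where the second inequality uses only that $|\om|<1$. Now I would invoke the inner hypothesis on $\f$: by Fatou's theorem, $|\f(r\z)|\to 1$ for almost every $\z\in\T$ as $r\to 1^-$. Fixing such $\z$ and letting $r\to 1^-$, the right-hand side collapses to $0$ while $|\f(r\z)|\to 1$ on the left, forcing $|\mathrm{Im\,}\om(r\z)|\to 0$. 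Hence the bounded harmonic function $\mathrm{Im\,}\om$ (bounded in modulus by $1$) has radial boundary values equal to $0$ almost everywhere on $\T$.

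The proof would conclude by the standard uniqueness for Poisson integrals: a bounded harmonic function in $\D$ whose radial limits vanish a.e.\ on $\T$ is identically zero. So $\mathrm{Im\,}\om\equiv 0$, the analytic function $\om$ is real-valued and therefore constant, and the normalization $\om(0)=0$ forces $\om\equiv 0$, which gives $F=\ell\circ 0\equiv 1$.

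The main obstacle, such as it is, lies in recognising how to wring information out of Theorem~\ref{thm-main}(c) at the boundary: one has to notice that the factor $1-|\om|^2$ is harmless (it stays bounded by $1$), so the inner hypothesis on $\f$ alone kills the right-hand side a.e.\ on $\T$. One could equally well argue from condition (d)—the bound on $|\arg F|$ tends to $0$ radially almost everywhere, giving the same conclusion via $\arg F = \arg((1+\om)/(1-\om))$—but condition (c) delivers the vanishing of $\mathrm{Im\,}\om$ most directly and makes the final harmonic-extension step essentially automatic.
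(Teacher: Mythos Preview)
Your proof is correct and follows essentially the same route as the paper: both pass to radial limits in the inequality of Theorem~\ref{thm-main}(c), use that $|\f|=1$ a.e.\ on $\T$ to force $\mathrm{Im\,}\om=0$ a.e.\ on the boundary, and then conclude $\om\equiv 0$. The only cosmetic difference is in that last step---the paper applies the maximum modulus principle to $g=\exp\{i\om\}$, while you invoke the Poisson representation of the bounded harmonic function $\mathrm{Im\,}\om$; both are equally standard.
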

\begin{proof}
The bounded functions $\f$ and $\om$ have radial limits almost everywhere on the circle. Thus, for almost every $\z\in\T$ we may pass to the limit as $z\to\z$ in inequality \eqref{key-modif} to conclude that $\mathrm{Im\,}\om (\z)=0$ almost everywhere on $\T$. Now it is an easy exercise to see that this together with $\om (0)=0$ implies $\om=0$. Just consider the bounded analytic function $g=\exp\{i\om\}$ in $\D$ whose boundary values on the circle have modulus one almost everywhere, hence $\|g\|_\infty=1$, and note that $g(0)=1$; it follows that $g\equiv 1$, hence $\om\equiv 0$ (that is, $F\equiv 1$).
\end{proof}
Here is the counterpart of this statement with assumptions on $\om$.
\begin{prop} \label{prop-om-inner}
Let $F=\ell\circ\o$, where $\om$ is an inner function. Then $T_{F,\f}(\cp) \subset\cp$ if and only if $\f\equiv 0$.
\end{prop}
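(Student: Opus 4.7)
The "if" direction is immediate: if $\varphi \equiv 0$, then for every $f \in \cp$ we have $T_{F,\varphi}(f)(z) = F(z)\, f(0) = F(z)$, which lies in $\cp$ by hypothesis.

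For the converse, my plan is to apply Theorem~\ref{thm-main}, specifically condition (c), and pass to radial boundary values. Since both $\omega$ (being inner, hence in particular bounded) and $\varphi$ (being Schwarz-type) are bounded analytic functions in $\D$, they admit finite radial limits almost everywhere on $\T$ with respect to normalized arc length measure. The strict inequality
\[
4 |\varphi(z)| \cdot |\mathrm{Im}\,\omega(z)| < (1 - |\omega(z)|^2)(1 - |\varphi(z)|^2)
\]
from \eqref{key-modif} then passes to a non-strict inequality in the radial limit at almost every $\zeta \in \T$. Using the defining property of inner functions, $|\omega(\zeta)| = 1$ a.e., the right-hand side vanishes a.e. on $\T$, and thus
\[
|\varphi(\zeta)| \cdot |\mathrm{Im}\,\omega(\zeta)| = 0 \quad \text{for a.e.\ } \zeta \in \T.
\]

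Next I would analyze this pointwise dichotomy on $\T$. Since $|\omega(\zeta)| = 1$ a.e., the condition $\mathrm{Im}\,\omega(\zeta) = 0$ forces $\omega(\zeta) \in \{-1, +1\}$. Let $E_{+} = \{\zeta \in \T : \omega(\zeta) = 1\}$ and $E_{-} = \{\zeta : \omega(\zeta) = -1\}$. If either $E_{+}$ or $E_{-}$ had positive measure, then the bounded analytic function $\omega - 1$ or $\omega + 1$ would vanish on a boundary set of positive measure, and the classical uniqueness principle for $H^{\infty}$ functions (through integrability of $\log |\cdot|$ on the circle, or equivalently the F.\ and M.\ Riesz theorem) would force $\omega \equiv 1$ or $\omega \equiv -1$, contradicting $\omega(0) = 0$. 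Hence $E_{+} \cup E_{-}$ is null, so $\mathrm{Im}\,\omega(\zeta) \neq 0$ almost everywhere, and therefore $\varphi(\zeta) = 0$ for a.e.\ $\zeta \in \T$. Applying the same uniqueness principle to the bounded function $\varphi$ yields $\varphi \equiv 0$.

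The technical delicate point that I expect to be the only real obstacle is justifying that the radial-limit argument really disposes of the strict inequality cleanly: one must observe that even though \eqref{key-modif} is strict inside $\D$, passing to boundary values only gives the non-strict version, which is all that is needed here because the right-hand side collapses to zero. Beyond that, the structure of the proof mirrors closely that of Proposition~\ref{prop-fi-inner}, with the roles of $\varphi$ and $\omega$ interchanged in the boundary estimate; the additional wrinkle — that being inner does not preclude the boundary values $\pm 1$ on sets of positive measure, whereas for $\varphi$ in Proposition~\ref{prop-fi-inner} one only needed $\mathrm{Im}\,\omega = 0$ a.e.\ — is precisely what the separate case analysis on $E_{\pm}$ is designed to handle.
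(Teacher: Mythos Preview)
Your proof is correct and follows essentially the same route as the paper: pass to radial limits in \eqref{key-modif}, use $|\omega|=1$ a.e.\ to kill the right-hand side, and then invoke the $H^\infty$ uniqueness (Nevanlinna) theorem to force $\varphi\equiv 0$. The only minor difference is in the intermediate step: the paper rules out $\mathrm{Im}\,\omega=0$ a.e.\ by quoting the $\exp(i\omega)$ trick from Proposition~\ref{prop-fi-inner} (which gives $\omega\equiv 0$, contradicting inner), whereas you split into $E_{\pm}=\{\omega=\pm 1\}$ and apply uniqueness directly---both arguments are equally short and valid.
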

\begin{proof}
After passing to the radial limits in \eqref{key-modif} we get that
$\f  \mathrm{Im\,}\om=0$ almost everywhere on the unit circle.
\par
If $\f=0$ only on a set of measure zero on the circle, then Im\,$\om=0$ almost everywhere on the circle. From the proof of the previous theorem we know that $\om\equiv 0$, which contradicts our initial assumption.
Hence $\f=0$ on a set of positive measure. By a classical theorem of Nevanlinna \cite[Theorem~2.2]{D1}, it follows that $\f\equiv 0$.
\end{proof}
It is easily seen from Theorem~\ref{thm-main} that any admissible multiplication symbol $F$ can only carry a very small portion of the boundary of the unit disk to the imaginary axis.
\begin{prop} \label{prop-meas-0}
Let $F\in\cp$, let $\f$ and $\om$ be two Schwarz-type functions,  $\f\not\equiv 0$, and suppose that $F=\ell\circ\om$ and $T_{F,\f}$ preserves \,$\cp$ as before. Denote the radial limits of $F$ again by $F$ and let
$$
 A = \{\z\in\T\,\colon\,\mathrm{Re\,}F(\z)=0\} = \{\z\in\T\,\colon\,|\om(\z)|=1, \,\om(\z)\neq 1\}\,.
$$
Then $m(A)=0$.
\end{prop}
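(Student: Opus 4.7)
The plan is to mimic the argument used for Proposition~\ref{prop-om-inner}, passing to radial limits in the characterization \eqref{key-modif}, with one additional step to handle a small exceptional set.

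First, since both $\f$ and $\om$ are bounded analytic in $\D$, they admit radial boundary values almost everywhere on $\T$ by Fatou's theorem. Taking radial limits in \eqref{key-modif} (the strict inequality becomes weak in the limit, which is all we need) gives
$$
 4|\f(\z)|\cdot|\mathrm{Im\,}\om(\z)| \le (1-|\om(\z)|^2)(1-|\f(\z)|^2) \quad \text{for a.e. } \z\in\T.
$$
On the set $A$ one has $|\om(\z)|=1$, so the right-hand side vanishes and therefore $|\f(\z)|\cdot|\mathrm{Im\,}\om(\z)|=0$ for almost every $\z\in A$.

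Next I would observe that, on $A$, the boundary value $\om(\z)$ lies in $\T\setminus\{1\}$, so the only way for $\mathrm{Im\,}\om(\z)$ to vanish is to have $\om(\z)=-1$. However, the function $\om+1$ is not identically zero (since $\om(0)=0\ne -1$), so by the classical theorem of Nevanlinna \cite[Theorem~2.2]{D1} its zero set on $\T$ has measure zero. Hence $\mathrm{Im\,}\om(\z)\ne 0$ for a.e.\ $\z\in A$, which forces $\f(\z)=0$ almost everywhere on $A$.

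Finally, if one had $m(A)>0$, then $\f$ would vanish on a boundary set of positive measure, and applying Nevanlinna's theorem a second time (now to $\f$) would yield $\f\equiv 0$, contradicting the standing hypothesis $\f\not\equiv 0$. Therefore $m(A)=0$, as required. The only delicate point, and thus the main obstacle (albeit a mild one), is dealing with the exceptional case $\om(\z)=-1$ where $\mathrm{Im\,}\om(\z)=0$ even though $\z\in A$; this is precisely what is resolved by the two successive applications of Nevanlinna's uniqueness theorem.
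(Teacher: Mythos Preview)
Your proof is correct and follows essentially the same approach as the paper: pass to radial limits in \eqref{key-modif}, observe that $\f\cdot\mathrm{Im\,}\om=0$ a.e.\ on $A$, and then invoke Nevanlinna's uniqueness theorem twice. The only cosmetic difference is the order in which the two applications are made---you first rule out $\om=-1$ to force $\f=0$ a.e.\ on $A$, whereas the paper first uses $\f\not\equiv 0$ to force $\mathrm{Im\,}\om=0$ (hence $\om=-1$) on a set of positive measure.
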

\begin{proof}
Assume the contrary: $m(A)>0$. After passing on to the radial limits in \eqref{key-modif}, we obtain
$$
 4 |\f(\z)|\cdot |\mathrm{Im\,}\om(\z)| \le (1-|\om(\z)|^2) (1-|\f(\z)|^2)
$$
for almost all $\z$ with $|\z|=1$. Specifically, $\,\f \mathrm{Im\,}\om =0$ holds at almost every point of $A$ (note that $\f$ may not have radial limits at some subset of $A$ of total measure zero). Since the measure of $A$ is positive and $\f\not\equiv 0$, we must have either $\om=1$ or $\om=-1$ on a set of positive measure in $A$. The first case is excluded by the definition of $A$ and in the second case the Nevanlinna theorem implies that $\om\equiv -1$ in $\D$, which is impossible in view of the assumption that $\om(0)=0$. This shows that $m(A)=0$.
\end{proof}
\par\medskip
In the context of (linear) weighted composition transformations the case in which $F=\f^\prime$ is often important. However, in our context it should be noted that in this case we only obtain another rigidity situation. Namely, assuming that $T_{\f^\prime,\f}(\cp) \subset\cp$ and choosing $f\equiv 1$ we get $\phi^\prime\in\cp$ hence $\f^\prime (0)=1$. The case of equality in the Schwarz lemma forces $\f(z)=z$, hence $F=\f^\prime\equiv 1$, so our transformation $T_{\f^\prime,\f}$ reduces to the identity map.

\subsection{Cases where one of the symbols has small range}
 \label{subsec-comp-cont}
Many non-trivial examples of weighted composition transformations that preserve class $\cp$ are possible when $\f(\D)$ is compactly contained in $\D$ or $F(\D)$ is contained in a sector, as the following results show.
\begin{prop} \label{prop-comp-cont}
Let $F\in\cp$ and let $\f$ be a Schwarz-type function such that $\|\f\|_\infty=R<1$. Then whenever the function $F$ satisfies
$$
  |\arg F(z)| < \frac{\pi}{2} - \arcsin \frac{2 R}{1+R^2}
$$
for all $z$ in $\D$, we have that $T_{F,\f}(\cp)\subset\cp$.
\end{prop}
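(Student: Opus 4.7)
The plan is to deduce this proposition as a direct consequence of condition (d) in Theorem~\ref{thm-main}, together with one elementary monotonicity observation. The whole point of the hypothesis $\|\f\|_\infty = R < 1$ is that we can replace the $z$-dependent quantity $|\f(z)|$ on the right-hand side of \eqref{ineq-arg} by the constant $R$ without losing anything, provided the resulting constant is an upper bound, i.e.\ provided the map $t \mapsto \arcsin\frac{2t}{1+t^2}$ is monotone.

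First I would record the monotonicity: the function $h(t) = \frac{2t}{1+t^2}$ has derivative $h'(t) = \frac{2(1-t^2)}{(1+t^2)^2} > 0$ on $[0,1)$, so $h$ is strictly increasing there, with range $[0,1]$. Since $\arcsin$ is also increasing on $[0,1]$, the composition $t \mapsto \arcsin\frac{2t}{1+t^2}$ is increasing on $[0,1)$. Because $|\f(z)| \le R$ for every $z \in \D$, this immediately yields
\[
  \arcsin \frac{2|\f(z)|}{1+|\f(z)|^2} \;\le\; \arcsin \frac{2R}{1+R^2}
\]
throughout $\D$, or equivalently
\[
  \frac{\pi}{2} - \arcsin \frac{2R}{1+R^2} \;\le\; \frac{\pi}{2} - \arcsin \frac{2|\f(z)|}{1+|\f(z)|^2}\,.
\]

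Next I would simply chain the inequalities. By the hypothesis of the proposition, $|\arg F(z)|$ is bounded above by the left-hand side of the last display, hence also by its right-hand side. That is precisely the inequality \eqref{ineq-arg} in Theorem~\ref{thm-main}(d), which is one of the equivalent characterizations of $T_{F,\f}(\cp)\subset\cp$, so the conclusion follows.

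There is essentially no main obstacle here; the only small thing to check is the monotonicity of $h$ on $[0,1]$, and even this is trivial. Conceptually, what the proposition says is that when $\f$ has range compactly contained in $\D$, the admissibility test becomes a single uniform sectorial condition on $F$, and the worst case $|\f(z)| = R$ is what dictates the required opening of the sector. I would also mention that this is only a sufficient condition: condition \eqref{ineq-arg} is sharper pointwise and may be satisfied even when the uniform bound fails at some $z$ where $|\f(z)|$ is strictly smaller than $R$.
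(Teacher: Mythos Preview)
Your proof is correct and follows exactly the paper's approach: invoke condition~(d) of Theorem~\ref{thm-main} and use the monotonicity of $t\mapsto 2t/(1+t^2)$ on $[0,1)$ to replace $|\f(z)|$ by $R$. The paper's own proof is a one-line version of what you wrote.
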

\begin{proof}
Follows from criterion (d) of Theorem~\ref{thm-main} and the fact that the function $2x/(1+x^2)$ is increasing in the interval $(0,1)$.
\end{proof}
\par\medskip\noindent
\textbf{Example 1}. An explicit example is $\f(z)=R z$, $0<R<1$, and
$$
 F(z)= \( \frac{1+z}{1-z} \)^\e\,, \qquad 0 < \e < 1 - \frac{2}{\pi} \arcsin \frac{2 R}{1+R^2}\,,
$$
a conformal map of the unit disk onto an angular sector with vertex at the origin. Condition \eqref{ineq-arg} is clearly satisfied.
\par
\begin{prop} \label{prop-comp-cont-dual}
Let $F\in\cp$ and $K=\sup_{z\in\D} |\arg F(z)|<\frac{\pi}{2}$. Write $K=\arcsin\frac{2R}{1+R^2}$, $0\le R<1$. If \,$\f$ is a Schwarz-type  function such that
$$
  \|\f\|_\infty \leq \frac{1-R}{1+R}
$$
then $T_{F,\f}(\cp)\subset\cp$.
\end{prop}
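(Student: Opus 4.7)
The plan is to verify condition (d) of Theorem~\ref{thm-main}: for every $z\in\D$, one must show
$$
|\arg F(z)| < \frac{\pi}{2} - \arcsin\frac{2|\f(z)|}{1+|\f(z)|^2}.
$$
By assumption, $|\arg F(z)|\leq K = \arcsin\frac{2R}{1+R^2}$, so it would suffice to prove the stronger, purely numerical inequality
$$
\arcsin\frac{2R}{1+R^2} + \arcsin\frac{2|\f(z)|}{1+|\f(z)|^2} < \frac{\pi}{2}.
$$
The natural move is to unpack the arcsines using the standard half-angle identity $\sin(2\theta)=\frac{2\tan\theta}{1+\tan^2\theta}$. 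For $x\in[0,1]$ one has $2\arctan x\in[0,\pi/2]$, and consequently $\arcsin\frac{2x}{1+x^2}=2\arctan x$. Applying this with $x=R$ and $x=|\f(z)|$ (which lies in $[0,1)$), the goal reduces to proving
$$
\arctan R + \arctan |\f(z)| < \frac{\pi}{4}.
$$

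The second step is elementary trigonometry via the tangent addition formula. Since the left-hand side lies in $[0,\pi/2)$, taking $\tan$ on both sides the inequality is equivalent to
$$
\frac{R+|\f(z)|}{1-R|\f(z)|} < 1,
$$
which (after clearing the positive denominator and regrouping) simplifies to $|\f(z)|(1+R)<1-R$, that is, $|\f(z)|<\frac{1-R}{1+R}$. Thus condition (d) will follow as soon as this strict bound on $|\f(z)|$ is secured at every point $z\in\D$.

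The one delicate point, and the main technical obstacle, is the strictness: the hypothesis only gives the non-strict bound $\|\f\|_\infty\le\frac{1-R}{1+R}$, while condition (d) demands a strict inequality on $|\arg F(z)|$. This is resolved by the maximum modulus principle. If $\f\equiv 0$, then $|\f(z)|=0$ and the required inequality reduces to $|\arg F(z)|\leq K<\pi/2$, which is immediate. If $\f\not\equiv 0$, then $\f$ is a non-constant analytic self-map of $\D$ bounded by $\frac{1-R}{1+R}<1$, so by the maximum modulus principle its modulus is strictly smaller than $\|\f\|_\infty$ at every interior point, and hence $|\f(z)|<\frac{1-R}{1+R}$ throughout $\D$. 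Feeding this strict bound back through the equivalent reformulations above yields the strict inequality required by Theorem~\ref{thm-main}(d), and therefore $T_{F,\f}(\cp)\subset\cp$.
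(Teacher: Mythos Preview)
Your proof is correct and follows essentially the same route as the paper: both verify condition~(d) of Theorem~\ref{thm-main} by bounding $|\arg F(z)|$ by $K=\arcsin\frac{2R}{1+R^2}$ and then reducing the required strict inequality to the numerical condition $\|\f\|_\infty\le\frac{1-R}{1+R}$. The only cosmetic difference is in the trigonometric bookkeeping: the paper takes sines and uses $\cos(\arcsin t)=\sqrt{1-t^2}$ to arrive at a quadratic in $\|\f\|_\infty$, whereas you use the identity $\arcsin\frac{2x}{1+x^2}=2\arctan x$ and the tangent addition formula, which is arguably cleaner. You are also more explicit than the paper about why the \emph{strict} inequality in~(d) is secured (maximum modulus principle, with the case $\f\equiv 0$ handled separately); the paper's proof relies on the same fact but leaves it implicit in the phrase ``in view of monotonicity.''
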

\begin{proof}
By assumption,
$$
 |\arg F(z)| \le K = \arcsin\frac{2R}{1+R^2}\,, \qquad z\in\D\,.
$$
In view of condition \eqref{ineq-arg} from Theorem~\ref{thm-main} it suffices to check that
$$
 \arcsin\frac{2R}{1+R^2} < \frac{\pi}{2} - \arcsin\frac{2 |\f(z)|}{1+|\f(z)|^2}
$$
holds for all $z$ in $\D$. Equivalently,
$$
  \arcsin\frac{2 |\f(z)|}{1+|\f(z)|^2} < \frac{\pi}{2} - \arcsin\frac{2R}{1+R^2}
$$
must hold throughout $\D$. This will certainly be satisfied if
\begin{equation}
  \arcsin\frac{2 \|\f\|_\infty}{1+\|\f\|_\infty^2} \le \frac{\pi}{2} - \arcsin\frac{2R}{1+R^2}
  \label{arc-sin}
\end{equation}
in view of monotonicity of the sine function in $[0,\frac{\pi}{2})$ and of $u(x)=\frac{2x}{1+x^2}$ in $[0,1)$. But \eqref{arc-sin} is clearly equivalent to
$$
 \frac{2 \|\f\|_\infty}{1+\|\f\|_\infty^2} \le \sin \(\frac{\pi}{2} - \arcsin\frac{2R}{1+R^2}\) = \cos \(\arcsin\frac{2R}{1+R^2}\) = \sqrt{1-\(\frac{2R}{1+R^2}\)^2} = \frac{1-R^2}{1+R^2}\,.
$$
This yields an elementary quadratic inequality in $\|\f\|_\infty$ which is easily seen to be satisfied whenever
$$
 0\le \|\f\|_\infty \le\frac{1-R}{1+R}\,.
$$
This proves the statement.
\end{proof}
\par\medskip
We now formulate a counterpart of Proposition~\ref{prop-comp-cont} with similar hypotheses on $\om$ instead of $\f$ which follows from our previous result.
\par
\begin{cor} \label{cor-comp-cont-dual}
Let $F=\ell\circ\om\in\cp$, where $\om$ is a Schwarz-type function. If $\,\|\om\|_\infty<1$ and $\f$ is a Schwarz-type function such that
$$
  \|\f\|_\infty < \frac{1-\|\om\|_\infty}{1+\|\om\|_\infty}
$$
then $T_{F,\f}(\cp)\subset\cp$.
\end{cor}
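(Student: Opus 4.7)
The plan is to reduce the statement to Proposition~\ref{prop-comp-cont-dual} by converting the hypothesis on $\|\om\|_\infty$ into a sup-norm bound on $|\arg F|$.

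First, I would recall the identity
$$
\left|\frac{\mathrm{Im\,}F(z)}{\mathrm{Re\,}F(z)}\right| = \frac{2|\mathrm{Im\,}\om(z)|}{1 - |\om(z)|^2},
$$
already derived in the proof of (b)$\Leftrightarrow$(c) of Theorem~\ref{thm-main}. Combining it with the trivial pointwise bound $|\mathrm{Im\,}\om(z)| \le |\om(z)| \le \|\om\|_\infty$ and with the monotonicity of $x\mapsto 2x/(1-x^2)$ on $[0,1)$, I obtain
$$
|\tan(\arg F(z))| \le \frac{2\|\om\|_\infty}{1 - \|\om\|_\infty^2}.
$$
Invoking then the identity $\arctan\frac{2R}{1-R^2} = \arcsin\frac{2R}{1+R^2}$ already recorded in \eqref{eq-trig} yields
$$
K := \sup_{z\in\D} |\arg F(z)| \le \arcsin\frac{2\|\om\|_\infty}{1+\|\om\|_\infty^2} < \frac{\pi}{2},
$$
so $F$ meets the initial hypothesis of Proposition~\ref{prop-comp-cont-dual}.

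Next, I would plug this information into that proposition. Writing $K = \arcsin\frac{2R'}{1+(R')^2}$ with $R'\in[0,1)$ as required there, the strict monotonicity of $x\mapsto \arcsin\frac{2x}{1+x^2}$ on $[0,1)$ forces $R' \le \|\om\|_\infty$. Since $x\mapsto (1-x)/(1+x)$ is strictly decreasing on $[0,1)$, the hypothesis $\|\f\|_\infty < \frac{1-\|\om\|_\infty}{1+\|\om\|_\infty}$ gives $\|\f\|_\infty < \frac{1-R'}{1+R'}$, and Proposition~\ref{prop-comp-cont-dual} then delivers $T_{F,\f}(\cp) \subset \cp$.

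The argument is essentially mechanical and I foresee no serious obstacle. The only point requiring some attention is verifying that the monotonicities line up in the right direction: a smaller $\|\om\|_\infty$ must produce both a smaller $K$ (tightening the bound on $\arg F$) and a weaker constraint on $\|\f\|_\infty$ (so that the hypothesis still suffices), which is precisely what the chain of inequalities above confirms.
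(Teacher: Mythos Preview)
Your proof is correct and follows the same overall strategy as the paper: bound $\sup_{z\in\D}|\arg F(z)|$ in terms of $\|\om\|_\infty$ and then invoke Proposition~\ref{prop-comp-cont-dual}. The difference lies in how that bound is obtained. The paper argues geometrically: since $F$ is subordinate to $\ell_R(z)=(1+Rz)/(1-Rz)$ with $R=\|\om\|_\infty$, the image $F(\D)$ sits inside the disk $\ell_R(\D)$, whose center and radius are computed explicitly; the maximum argument is then read off from the tangent line from the origin to that disk, giving $|\arg F(z)|<\arcsin\tfrac{2R}{1+R^2}$. You instead recycle the analytic identity $|\tan(\arg F)|=2|\mathrm{Im}\,\om|/(1-|\om|^2)$ from Theorem~\ref{thm-main} and bound it pointwise by monotonicity, arriving at the same inequality without any picture. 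Your route is shorter and stays within the computational machinery already set up; the paper's route has the advantage of displaying concretely the region $F(\D)$ is trapped in, which is a bit more informative geometrically. Both finish with the same (implicit in the paper, explicit in your write-up) monotonicity step passing from $R'\le\|\om\|_\infty$ to the hypothesis on $\|\f\|_\infty$.
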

\begin{proof}
Let $R=\|\om\|_\infty<1$. Then the function $F$ is clearly subordinated to the function
$$
 \ell_R=\frac{1+Rz}{1-Rz}
$$
in the usual sense that $F=\ell_R\circ \(\frac{\om}{R}\)$. Thus, $F(\D) \subset \ell_R(\D)$. It is plain that $\ell_R(\D)$ is the disk whose diameter has endpoints
$$
 \ell_R (-1) = \frac{1-R}{1+R}\,, \qquad \ell_R (1) = \frac{1+R}{1-R}\,,
$$
hence its center and radius are respectively
$$
 C=\frac{1+R^2}{1-R^2}\,, \qquad \r=\frac{2R}{1-R^2}\,.
$$
Let us denote by $C_R=\{z\in\C\,\colon\,|z-C|=\r\}$ the boundary of this disk. Let $a$ be the point of intersection of the circle $C_R$ with its tangent from the origin in the upper half-plane. By looking at the right triangle determined by the origin and the points $a$ and $C$, we infer that
$$
  \arg a = \arcsin \frac{\r}{C} = \arcsin \frac{2 R}{1+R^2} \,.
$$
One argues similarly for the point of tangent in the lower half-plane and obtains that, for every $z$ in $\D$,
$$
 |\arg F(z)| < \arcsin \frac{2 R}{1+R^2}\,.
$$
The conclusion now follows from Proposition~\ref{prop-comp-cont-dual}.
\end{proof}
\par\medskip
Many interesting examples in geometric function theory are obtained from special types of conformal mappings such as the lens maps. In what follows, for $0<\a<1$ we will denote by $\la_\a$ the standard  \textit{lens map\/} given by the formula
$$
 \la_\a(z)=(\ell^{-1}\circ\ell^\a)(z) = \frac{\ell^\a(z)-1}{\ell^\a(z)+1} \,, \qquad z\in\D\,.
$$
It is elementary that $\la_\a$ is a conformal map of the unit disk onto a lens-shaped region $L_\a$ bounded by two circular arcs (symmetric with respect to the real axis) that intersect at the points $\pm 1$ forming an angle of opening $\pi\a$ at each of these points; see \cite[p.~27]{S}.
\par
The following simple geometric observation will be useful. The half-plane map $\ell$ is bijective between a lens-shaped region $L_\a$ and an angle with vertex at the origin and maps in a one-to-one fashion the largest disk contained in the lens-shaped region onto a disk tangent to the legs of the angle.
\par
Our next result essentially shows that when the multiplication symbol is obtained by composing the half-plane map with a lens map, the statements of Proposition~\ref{prop-comp-cont} and Proposition~\ref{prop-comp-cont-dual} can be unified into a single ``if and only if'' statement.
\par\medskip
\begin{prop}\label{prop-lens}
Let $F=\ell\circ\la_\a$, where $\la_\alpha$ is a lens map, and let $\phi$ be a Schwarz-type function. Then $T_{F,\phi}(\mathcal{P}) \subset\mathcal{P}$ if and only if
$$
 \|\phi\|_{\infty}\leq\frac{1-R}{1+R}\,,
$$
where $\frac{2R}{1+R^2}=\sin{\frac{\alpha\pi}{2}}$.
\end{prop}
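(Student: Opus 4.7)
The plan is to leverage a convenient simplification: since $\lambda_\alpha = \ell^{-1} \circ \ell^\alpha$, one has $F = \ell \circ \lambda_\alpha = \ell^\alpha$, so
\[
F(z) = \left(\frac{1+z}{1-z}\right)^\alpha
\]
and $F$ maps $\mathbb{D}$ conformally onto the sector $\{w : |\arg w| < \alpha\pi/2\}$. In particular $K := \sup_{z \in \mathbb{D}}|\arg F(z)| = \alpha\pi/2$, which by the hypothesis $\sin(\alpha\pi/2) = 2R/(1+R^2)$ coincides with $\arcsin(2R/(1+R^2))$.

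With this identification, the sufficiency direction is immediate from Proposition~\ref{prop-comp-cont-dual}: with this $K$ and the hypothesis $\|\phi\|_\infty \leq (1-R)/(1+R)$, that proposition directly gives $T_{F,\phi}(\mathcal{P}) \subset \mathcal{P}$.

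For the converse, I would assume $T_{F,\phi}(\mathcal{P}) \subset \mathcal{P}$ and invoke criterion (d) of Theorem~\ref{thm-main}. Combined with $|\arg F(z)| = \alpha|\arg\ell(z)|$ and the identity $\sin(\pi/2 - x) = \cos x$, it rewrites as
\[
\frac{2|\phi(z)|}{1+|\phi(z)|^2} < \cos\bigl(\alpha|\arg\ell(z)|\bigr) \quad \text{for all } z \in \mathbb{D}.
\]
The key step is then a short boundary computation: from $\ell(e^{i\theta}) = i\cot(\theta/2)$ one sees that $|\arg\ell(r\zeta)| \to \pi/2$ as $r \to 1^-$ for every $\zeta \in \partial\mathbb{D} \setminus \{\pm 1\}$, so the right-hand side tends to $\cos(\alpha\pi/2) = (1-R^2)/(1+R^2)$. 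Since $\phi \in H^\infty$ admits radial boundary values $\phi^*(\zeta)$ almost everywhere by Fatou's theorem, passing to radial limits yields
\[
\frac{2|\phi^*(\zeta)|}{1+|\phi^*(\zeta)|^2} \leq \frac{1-R^2}{1+R^2} \quad \text{for a.e.\ } \zeta \in \partial\mathbb{D},
\]
and inverting the increasing bijection $x \mapsto 2x/(1+x^2)$ of $[0,1)$ onto itself gives $|\phi^*(\zeta)| \leq (1-R)/(1+R)$ a.e., whence $\|\phi\|_\infty = \esssup_\zeta |\phi^*(\zeta)| \leq (1-R)/(1+R)$.

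The only real obstacle is the transition from the strict pointwise inequality provided by (d) to the non-strict sup bound in the statement; taking a radial limsup is exactly the device that effects this transition, and the exceptional points $\pm 1$ cause no trouble since they form a set of arc-length measure zero.
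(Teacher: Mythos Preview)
Your proposal is correct and follows essentially the same route as the paper. Both arguments invoke Proposition~\ref{prop-comp-cont-dual} for the sufficiency, and for the necessity both pass to radial limits in criterion~(d) of Theorem~\ref{thm-main}, using that $|\arg F|$ has boundary value $\alpha\pi/2$ almost everywhere (the paper states this directly from $\sup_{z\in\D}|\arg F(z)|=\alpha\pi/2$, while you make the identification $F=\ell^\alpha$ explicit and compute $|\arg\ell(r\zeta)|\to\pi/2$); the final algebraic inversion of $x\mapsto 2x/(1+x^2)$ is identical.
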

\begin{proof}
Suppose first that $\|\phi\|_{\infty}\leq\frac{1-R}{1+R}$. Then, since $$
 \sup_{z\in\mathbb{D}}|\arg\,F(z)| = \frac{\alpha\pi}{2} =K= \arcsin\frac{2R}{1+R^2}\,,
$$
by Proposition~\ref{prop-comp-cont-dual} it follows that $T_{F,\phi} (\mathcal{P}) \subset \mathcal{P}$. Now, if $T_{F,\varphi}(\mathcal{P}) \subset\mathcal{P}$, by condition \eqref{ineq-arg} of Theorem~\ref{thm-main} we have
$$
 |\arg\,F(z)|+\arcsin \frac{2|\phi(z)|}{1+|\phi(z)|^2} <\frac{\pi}{2}\,.
$$
Therefore, for almost every $\zeta\in\mathbb{T}$,
$$
 \arcsin \frac{2|\phi(\zeta)|}{1+|\phi(\zeta)|^2}\leq \frac{\pi}{2}-\frac{\alpha\pi}{2}=\frac{\pi}{2}-\arcsin\frac{2R}{1+R^2}\,.
$$
Then, as in the proof of Proposition~\ref{prop-comp-cont-dual},
$$
 \frac{2\|\phi\|_\infty}{1+\|\phi\|_\infty^2}\leq\sin\left(\frac{\pi}{2} -\arcsin\frac{2R}{1+R^2}\right)=\frac{1-R^2}{1+R^2},
$$
and from here,
$$
 \|\phi\|_{\infty}\leq\frac{1-R}{1+R}\,.
$$
\end{proof}

\subsection{Composition symbols with radial limits of modulus one and/or angular derivatives}
 \label{subsec-ang-deriv}
Recall that an analytic self-map $\f$ of $\D $ is said to have an
\emph{angular derivative\/} $\f^\prime(\z)$ (in the restricted
sense of Carath\'eodory \cite[\S~298-299]{C}) at a point $\z $ on the unit circle $\T $
if it satisfies the following two conditions:
\par
(a) the nontangential limit of $\f$ at $\z $ has modulus one;
\par
(b) $\f^\prime(z)$ has a finite nontangential limit as $z\to\z $.
\par\smallskip
The Julia-Carath\'eodory theorem (see \cite{CM} or \cite{S}) states that $\f$ has an angular derivative at $\z $ if and only if
$$
 \liminf_{z\to\z}\frac{1-|\f(z)|}{1-|z|}<\infty
$$
and, in this case, $|\f^\prime(\z)|$ equals the above
(unrestricted) lower limit (note that this limit is always strictly positive \cite[p.~57]{S}. Otherwise it is understood that
$|\f^\prime(\z)|=\infty $.
\par
Even though the angular derivative of $\f$ need not exist
anywhere on $\T $ as a finite number, the function $|\f^\prime| :
\T\to [0,\infty] $ is well defined in this extended sense; being
lower semicontinuous (\cite{BS}, Lemma~2.5), it attains its
minimum on $\T $ (\emph{cf.\/} also \cite[Proposition~2.46]{CM}).
\par
One ought to keep in mind that the function $|\f^\prime|$ on $\T
$ as above, in general, does not coincide at all with the modulus
of the boundary values of $\f^\prime $ (if those exist). The most
obvious example is the linear map $\f(z)=az+b $ onto a disk
compactly contained in $\D $, which happens precisely when $|a|+|b|<1 $. Its usual derivative is constant everywhere, while the angular derivative does not exist at any point on the boundary; in this case, we interpret that $|\f^\prime(\z)|=\infty$ for every point $\z$ on the unit circle.
\par
The concept of angular derivative is fundamental in the study of compactness of composition operators on Hardy and Bergman spaces, as well as in the iteration of analytic self-maps of the unit disk.
\par\medskip
Our next result shows that if $\f$ possesses even a mildly reasonable boundary behavior at a point on the unit circle then $\om$ automatically cannot be ``too good'' at the same point.
\begin{thm} \label{thm-ang-deriv}
Let $F$, $\om$, and $\f$ be as before and suppose that at some point $\z$ on the unit circle the function $\f$ has radial limit of modulus one. Then if the transformation $T_{F,\f}$ preserves $\cp$, the function $\om$ cannot have a finite non-zero angular derivative at $\z$.
\end{thm}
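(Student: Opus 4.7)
\emph{The approach is by contradiction:} assume $\om$ has a finite non-zero angular derivative at $\zeta$. By the Julia-Carath\'eodory theorem, the non-tangential limit $\om(\zeta)\in\T$ exists, $\om'(\zeta)=\alpha\bar\zeta\om(\zeta)$ with $\alpha=|\om'(\zeta)|>0$ finite, and the first-order expansion
$$
 \om(z) - \om(\zeta) = \om'(\zeta)(z-\zeta)\bigl(1 + o(1)\bigr)
$$
holds as $z\to\zeta$ within any Stolz angle. In parallel, since $\f$ is bounded and analytic with radial limit of modulus one at $\zeta$, Lindel\"of's theorem promotes this radial limit to a non-tangential one, so $|\f(z_n)|\to 1$ for any sequence $z_n\to\zeta$ inside a Stolz angle at $\zeta$.

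The plan is to apply condition \eqref{ineq-arg} of Theorem~\ref{thm-main} along a judiciously chosen non-tangential sequence
$$
 z_n = \zeta\bigl(1 - t_n e^{i\theta}\bigr), \qquad t_n\to 0^+,
$$
with a \emph{fixed} $\theta\in(0,\pi/2)$. An elementary computation gives $|\zeta-z_n|/(1-|z_n|)\to 1/\cos\theta$, so this sequence lies inside a Stolz angle at $\zeta$. Substituting it into the Julia-Carath\'eodory expansion yields
$$
 \om(z_n) = \om(\zeta)\bigl(1 - \alpha t_n e^{i\theta}(1+o(1))\bigr),
$$
and the leading-order behavior of $F(z_n)=\ell(\om(z_n))$ is then computed case by case: when $\om(\zeta)=1$ one finds $F(z_n)\sim(2/\alpha t_n)\,e^{-i\theta}$, so $\arg F(z_n)\to-\theta$; when $\om(\zeta)=-1$ similarly $\arg F(z_n)\to\theta$; and when $\om(\zeta)\notin\{1,-1\}$ the limit $F(z_n)\to\ell(\om(\zeta))$ lies on the imaginary axis, giving $|\arg F(z_n)|\to\pi/2$. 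In every case $\lim_{n\to\infty}|\arg F(z_n)|$ is strictly positive.

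Finally, since $|\f(z_n)|\to 1$,
$$
 \frac{\pi}{2} - \arcsin\frac{2|\f(z_n)|}{1+|\f(z_n)|^2} \;\longrightarrow\; \frac{\pi}{2} - \arcsin 1 = 0.
$$
Passing to the limit in inequality \eqref{ineq-arg} at $z=z_n$ thus forces a strictly positive quantity to be bounded above by $0$, which is the desired contradiction. The delicate point is the degenerate case $\om(\zeta)=\pm 1$, where $F(z_n)$ either diverges or vanishes and one must track the complex phase $e^{i\theta}$ inside the Julia-Carath\'eodory expansion in order to extract a non-trivial limiting argument for $F(z_n)$; when $\om(\zeta)\notin\{1,-1\}$, by contrast, even the radial direction $\theta=0$ would already suffice.
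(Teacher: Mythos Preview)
Your proof is correct and follows the same overall strategy as the paper: obtain a contradiction with condition~(d) of Theorem~\ref{thm-main} by approaching $\z$ non-tangentially and showing that $|\arg F|$ stays bounded away from zero while the right-hand side of \eqref{ineq-arg} tends to zero.

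The execution differs in two minor but noteworthy ways. First, the paper begins by passing to the limit in \eqref{key-modif} to conclude $\mathrm{Im}\,\om(\z)=0$, thereby reducing to the two cases $\om(\z)=\pm 1$; it then invokes Pommerenke's result on local univalence and angle-preservation at boundary points with finite non-zero angular derivative to produce a curve $\g$ mapped by $\om$ onto a non-horizontal segment, along which $\arg F$ has a non-zero limit. You instead work directly with the first-order Julia--Carath\'eodory expansion along the explicit sequence $z_n=\z(1-t_ne^{i\theta})$, which handles all three positions of $\om(\z)$ uniformly and avoids the external reference. Second, you make explicit the appeal to Lindel\"of's theorem needed to promote the radial limit of $\f$ to a non-tangential one; the paper uses this implicitly when taking the limit along the (necessarily non-radial) curve $\g$. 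Your version is thus slightly more self-contained, at the cost of a small amount of computation.
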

\begin{proof}
Note that $T_{F,\f}$ preserves $\cp$ if and only if the transformation $T_{F_\la,\f_\la}$ preserves $\cp$, where $F_\la (z)=F(\la z)$ and $\f_\la (z)=\f (\la z)$, whenever $|\la|=1$. Hence, we may assume without loss of generality that $\zeta=1$.
\par
Suppose that $\om$ has a finite angular derivative at $\z=1$. Then the radial limit $\om(1)$ exists and $|\om(1)|=1$. Taking the angular limit as $z\to 1$ in \eqref{key-modif}, we conclude that Im\,$\{\om(1)\}=0$. Thus, either $\om(1)=-1$ or $\om(1)=1$.
\par
Let us first consider the case $\om(1)=-1$. Since at $\z=1$ the angular derivative of $\om$ is neither $0$ nor $\infty$, we know \cite[p. 291]{P} that it is actually univalent in some Stolz domain with vertex at $z=1$:
\[
 \Delta=\left\{z\colon |\arg\,  (1-z)|< \t, \ r<|z|<1\right\}
\]
for suitable $r\in (0,1)$ and $\t>0$. Also, as is well known (\textit{cf.} again \cite[p. 291]{P}), the function $\om$ preserves angles between curves contained in $\De\cup \{1\}$ that meet at $z=1$. This shows that there exists a curve $\g:[0,1]\to\Delta\cup\{1\}$ with $\g(1)=1$ and which is mapped by $\om$ onto some non-horizontal segment
$$
 S=\{-1+se^{i \a_0}\colon 0\le s\le s_0\}\,, \quad 0<|\a_0|<\frac{\pi}{2}\,,
$$
for an appropriate value of $s_0$. (To see this, it suffices to look at the image under $\om$ of the suitable Stolz domain mentioned earlier  with vertex at $1$, which will contain another Stolz domain with vertex at $-1$, and to select $\a_0$ and $s_0$ so that the segment $S$ is contained in this new Stolz angle and is not contained in the real axis.) Keeping in mind that $F=\ell\circ\om$ and $\ell$ is a M\"obius transformation which maps the diameter $(-1,1)$ to the positive semi-axis, we see that
\[
 \arg\,  F(\g(t)) = \arg\,   \ell(\om(\g(t))) \to \a_0\,,\quad\text{as}\quad t\to 1^-\,.
\]
Therefore, taking the limit as $z\to 1$ along $\g$ in \eqref{ineq-arg}, we obtain $|\a_0|\le 0$, which is contrary to our construction of the segment $S$. This completes the proof in the case when $\om(1)=-1$.
\par
By \eqref{key-modif}, $T_{F,\f}$ preserves $\cp$ if and only if $T_{G,\f}$ with $G=(1-\om)/(1+\om)$ does, so we can argue as above in the case when $\om(1)=1$ to get a contradiction again.
\end{proof}
\par\medskip
There are two ways in which the function $\om$ can fail to have angular derivative: either it does not have a radial limit of modulus one or it does but the differential quotient fails to have a limit at the point in question. Here is an example of the first kind. It deals with the map $\f$ such that $\f(\D)$ has a tangential contact with the unit circle. The price we pay for this is that $\om$ is a dilated self-map of the disk (hence, in this example $\om(\D)$ is compactly contained in $\D$).
\par\medskip\noindent
\textbf{Example 2}. For $K\ge 3/2$, let
$$
 \f(z)=\frac{z(1+z)}{2}\,, \qquad \om(z)=\frac{z(2-z)}{2K}\,.
$$
Both are clearly Schwarz-type functions. Obviously, $\f(1)=1$ and $\f$ is conformal at $z=1$ since $\f^\prime(1)\neq 0$. For a sufficiently large value of $K$ (which will be determined below) one can also check that our condition \eqref{key-modif} is satisfied, hence $T_{F,\f}(\cp) \subset\cp$. Indeed, it is immediate that
$$
 \mathrm{Im\,}\om(z)=\frac{y(1-x)}{K}\,, \qquad z=x+iy\,.
$$
Checking our condition \eqref{key-modif} in this case reduces to verifying that
$$
 \frac{2\,\left| z (1+z) y \right|\,(1-x)}{K} < \(1 - \left| \frac{z(1+z)}{2}\right|^2\) \(1 - \left| \frac{z(2-z)}{2 K}\right|^2\)
$$
holds for all $z$ in $\D$. (Note that as $z\to 1$, both sides tend to zero but the strict inequality is maintained.) Since $x^2+y^2=|z|^2<1$, it is clear that
$$
 \frac{2\,\left| z (1+z) y \right|\,(1-x)}{K} < \frac{4 (1-x)}{K}
$$
while the right-hand side can be estimated from below as follows:
\begin{eqnarray*}
 \(1 - \left| \frac{z(1+z)}{2}\right|^2\) \(1 - \left| \frac{z(2-z)}{2 K}\right|^2\) &>& \(1 - \frac{|1+z|^2}{4}\) \(1 - \frac{9}{4 K^2}\)
\\
 & = & \(1 - \frac{(1+x)^2 + y^2}{4}\) \(1 - \frac{9}{4 K^2}\)
\\
 &\ge & \(1 - \frac{2+2x}{4}\) \(1 - \frac{9}{4 K^2}\)
\\
 &=& \frac{1-x}{2} \(1 - \frac{9}{4 K^2}\)
\end{eqnarray*}
so it is only left to check that
$$
 \frac{4 (1-x)}{K} < \frac{1-x}{2} \(1 - \frac{9}{4 K^2}\)
$$
for $K$ large enough and $|x|<1$, which is clear. The inequality holds for all $K>4+\dfrac{\sqrt{73}}{2}$.
\par\medskip
The natural question arises as to whether it is possible to have an example where both $\f$ and $\om$ can have radial limits of modulus one at the same point (obviously, without having an angular derivative at the point in question) but the weighted composition $T_{F,\f}$ still preserves $\cp$. The following example, illustrated by the figure below, gives an affirmative answer.
\par\medskip\noindent
\textbf{Example 3}. Consider the planar domain
$$
 \O = \{x+iy\,\colon\, 4 |y| \sqrt{x^2+y^2} < (1-x^2-y^2)^2\}\,,
$$
clearly symmetric with respect to both the real and imaginary axes. Let $\om$ be a conformal map of $\,\D$ onto $\O$ which fixes the origin. Starting with the subdomain of $\O$ in the upper half-plane and using the Schwarz reflection principle, one can also choose $\om$ in such a fashion that it fixes the diameter $(-1,1)$ and $\om(1)=1$ in the sense of a radial limit. Let $\f=\om$. It can now easily be checked that our condition \eqref{key-modif} is satisfied, hence $T_{F,\f}(\cp) \subset \cp$.
\par
Note, however, in relation to this ``leaf-shaped'' region that our mapping $\om=\f$ has boundary contact with the unit circle but does not have angular derivative at $z=1$. The intuitive reason for this is that the corners at $-1$ and $1$ are contained in lens-shaped regions and lens maps do not have angular derivatives. A rigorous proof of this fact can be given by using subordination. Alternatively, one can write  the equation of the boundary of $\O$ in polar coordinates:
$$
 4 r^2 |\sin \t| = (1-r^2)^2\,.
$$
Solving for $r$, one obtains
$$
 1-r^2 = 2 \(\sqrt{\sin |\t|+\sin^2 |\t|}-\sin |\t|\)\,,
$$
and by elementary calculus one checks that for a fixed $\e>0$ the integral
$$
 \int_0^\e \frac{1-r(\t)}{\t^2}\,d\t
$$
diverges and the conclusion follows by the Tsuji-Warschawski criterion \cite[p.~72]{S}.
\par\medskip

\begin{figure}[htbp]
{\centering
\includegraphics[width=\textwidth]{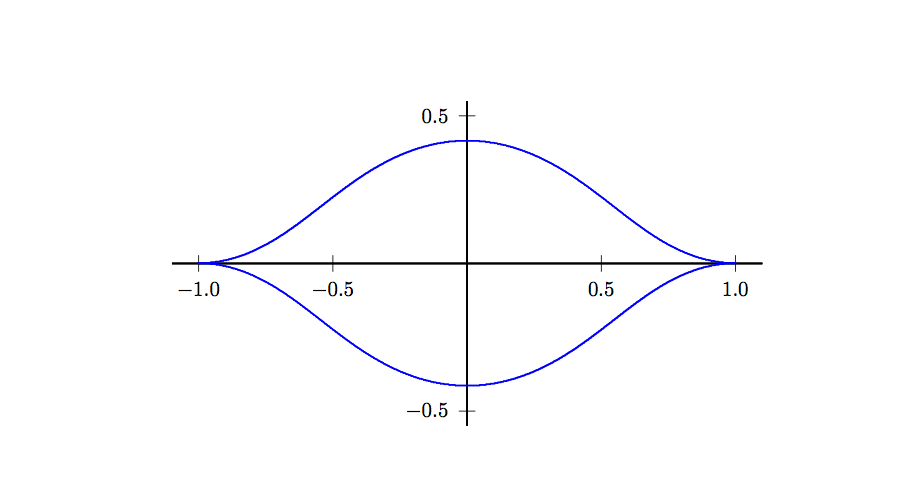}
\caption{The boundary of the leaf-shaped region $\O$.}\par\medskip
}
\end{figure}

\par\medskip
\section{Fixed points of weighted composition transformations that preserve $\cp$}
 \label{sect-fixed-pts}
\par
Even though we are working in a non-linear context, it is possible to adapt the arguments typical for such situations; \textit{cf.} \cite[Sect.~6.1]{S}.
\par
\begin{thm} \label{thm-fixed-pts}
Let $T_{F,\f}$ be a weighted composition transformation such that $T_{F,\f}(\mathcal{P})\subset\mathcal{P},$ where $F=\ell\circ \om,$ $\f$ and $\om$ are Schwarz-type functions, and $\f$ is not a rotation. Then $T_{F,\f}$ has a unique fixed point which is obtained by iterating $T_{F,\f}$ applied to arbitrary $f$ in $\cp$.
\par
In the case when $\,\f$ is inner but not a rotation, the unique fixed point is the constant function one.
\end{thm}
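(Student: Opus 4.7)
My plan is to adapt the Denjoy--Wolff-type argument used for linear composition operators (\textit{cf.} \cite[Sect.~6.1]{S}) to this nonlinear context. I begin by unfolding the iterates: letting $\f_0(z)=z$ and $\f_n=\f\circ\f_{n-1}$ denote the iterates of $\f$, an easy induction gives
$$
 T_{F,\f}^{\,n}(f)(z) \,=\, f(\f_n(z))\,P_n(z)\,, \qquad P_n(z):=\prod_{k=0}^{n-1} F(\f_k(z))\,.
$$
Since $\f$ is a Schwarz-type function that is not a rotation, the Schwarz lemma forces $|\f^\prime(0)|<1$, so the classical Denjoy--Wolff theorem yields $\f_n\to 0$ uniformly on compact subsets of $\D$. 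In particular, for every $f\in\cp$ we have $f(\f_n(z))\to f(0)=1$, and likewise $F(\f_n(z))\to 1$, both uniformly on compacta.

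The main step is to prove that the sequence $\{P_n\}$ converges in the compact-open topology to a fixed point $g\in\cp$ of $T_{F,\f}$. Since each $P_n=T_{F,\f}^{\,n}(1)$ lies in $\cp$, and $\cp$ is compact in the compact-open topology (being the closed convex hull of $\cl$), every subsequence of $\{P_n\}$ has a further convergent sub-subsequence with limit in $\cp$. The telescoping identity
$$
 P_{n+1}(z)-P_n(z)\,=\,P_n(z)\bigl(F(\f_n(z))-1\bigr)\,,
$$
combined with the local uniform boundedness of $\cp$ supplied by the growth theorem, shows that $P_{n+1}-P_n\to 0$ uniformly on compacta. Consequently, whenever $P_{n_j}\to g$, also $P_{n_j+1}\to g$; but $P_{n_j+1}=T_{F,\f}(P_{n_j})\to T_{F,\f}(g)$ by continuity of the operations $h\mapsto h\circ\f$ and $h\mapsto F\cdot h$ in the compact-open topology. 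Hence $T_{F,\f}(g)=g$, so every cluster point of $\{P_n\}$ is a fixed point of $T_{F,\f}$.

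Uniqueness and full convergence of $\{P_n\}$ then follow simultaneously: if $h\in\cp$ satisfies $T_{F,\f}(h)=h$, iterating gives $h(z)=h(\f_n(z))\,P_n(z)$ for every $n$; evaluating along any subsequence $P_{n_j}\to g$ and using $h(\f_{n_j}(z))\to h(0)=1$ yields $h\equiv g$. Thus every cluster point of $\{P_n\}$ coincides with every fixed point, which gives uniqueness of $g$ and therefore $P_n\to g$ for the whole sequence. For arbitrary $f\in\cp$ we now conclude $T_{F,\f}^{\,n}(f)=f(\f_n)\,P_n\to 1\cdot g=g$, as required. When $\f$ is additionally inner, Proposition~\ref{prop-fi-inner} forces $F\equiv 1$, so $P_n\equiv 1$ and the unique fixed point is $g\equiv 1$. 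The step I expect to demand the most care is the passage $T_{F,\f}(P_{n_j})\to T_{F,\f}(g)$: it requires using the telescoping estimate above together with the local boundedness of $\cp$ to ensure that compact-open convergence survives both composition with $\f$ and multiplication by the (in general unbounded) symbol $F$.
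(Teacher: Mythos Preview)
Your argument is correct and takes a genuinely different route from the paper's proof. The paper establishes convergence of $P_n=\prod_{k=0}^{n-1}F\circ\f_k$ directly: it proves the quantitative Schwarz-type estimate $|\f_k(z)|\le\delta^k|z|$ on each disk $\{|z|\le r\}$ (with $\delta=\max_{|z|\le r}|\f(z)|/r<1$), and from this deduces absolute convergence of the series $\sum_k|1-F\circ\f_k|$, hence uniform convergence of the infinite product to a function $G\in\cp$, which is then checked to be a fixed point. Uniqueness is obtained, as you do, from $g=T_{F,\f}^{\,n}g\to G$. Your approach replaces this explicit rate of convergence by a soft compactness argument: extract cluster points of $\{P_n\}$ in the compact family $\cp$, use the telescoping identity to see $P_{n_j+1}\to g$ whenever $P_{n_j}\to g$, and use the continuity of $T_{F,\f}$ to conclude $T_{F,\f}(g)=g$; uniqueness then forces all cluster points to coincide. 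The paper's argument buys an explicit geometric rate, while yours is shorter and avoids estimates entirely. Two minor remarks: the passage $T_{F,\f}(P_{n_j})\to T_{F,\f}(g)$ that you flag as delicate actually needs only that composition with $\f$ and multiplication by the locally bounded function $F$ are continuous in the compact-open topology---the telescoping identity is used for the \emph{other} half, namely $P_{n_j+1}\to g$. And your treatment of the inner case is cleaner than the paper's: once $F\equiv 1$ you get $P_n\equiv 1$ immediately, whereas the paper invokes a result on Schr\"oder's equation.
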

\begin{proof}
We first show that the limit of iterates of $T_{F,\f}$ applied to an arbitrary function $f$ in $\cp$ is a fixed point of the transformation.  Define the iterations of $\f$ in the usual way, $\f_0$ being the identity function and $\f_{n+1}=\f_n\circ\f$, $n\ge 0$. Let $f\in\cp$. It is easy to see by induction that
$$
 F (F\circ\f)\ldots (F\circ\f_{n-1}) (f\circ\f_n) = T^n_{F,\f}f \in\mathcal{P}
$$
for any integer $n\ge 1$. By our assumptions on $\f$, the origin is its only fixed point in $\D$. Since $\f$ is not a disk automorphism, it follows that $\f_n \to 0$ uniformly on compact subsets of $\D$ and therefore also $\om \circ\f_n\to 0$ in the compact-open topology  as $n\to\infty$. Thus,  $f\circ\f_n\to 1$ uniformly on compact subsets as $n\to\infty$. On the other hand,
$$
 \prod_{k=0}^{n-1} (F\circ\f_k) = \prod_{k=0}^{n-1}\frac{1+\om  \circ\f_k}{1-\om \circ\f_k}\,,
$$
so proving the uniform convergence on compact subsets of the infinite product $\prod_{k=0}^{\infty} (F\circ\f_k)$ is equivalent to proving the convergence on compact subsets of $\D$ of the sums
$$
 \sum_{k=0}^{n-1}\left|1-\frac{1+\om \circ\f_k}{1-\om \circ\f_k}\right| = 2\sum_{k=0}^{n-1}\left|\frac{\om \circ\f_k}{1-\om \circ\f_k}\right|\,.
$$
For $r\in (0,1)$ fixed, let $m(r)=\max_{|z|\le r}|\f(z)|$. Let $\delta=m(r)/r$. Clearly, $\d<1$ since $\f$ is not a rotation. Applying the Schwarz lemma to $\f(r w)/m(r)$, we get $|\f(r w)|/m(r)\le |w|$ for any $w\in\mathbb{D}$, and from here
$$
 |\f(z)|\leq\frac{m(r)}{r}|z|=\delta|z|
$$
whenever $|z|\leq r$. Iterating this inequality, we get
$$
 |\f_k(z)|\le \delta|\f_{k-1}(z)|\leq\ldots\leq\delta^k|z|
$$
for $|z|\le r$. Using the fact that $\om$ is a Schwarz-type function, we obtain
$$
 \left|1-\frac{1+\om \circ\f_k}{1-\om \circ\f_k}\right| = 2 \left|\frac{\om \circ\f_k}{1-\om \circ\f_k}\right|\le 2 \frac{|\om \circ\f_k|}{1-|\om \circ\f_k|} \le 2 \frac{|\f_k|}{1-|\f_k|}
\le 2 \frac{\delta^k r}{1-\delta^k r} \le \frac{2 r}{1-r} \delta^k
$$
in the disk $\{z\,\colon\,|z|\le r\}$. Thus, the series
$$
 \sum_{k=0}^\infty \left|1-\frac{1+\om \circ\f_k}{1-\om \circ\f_k}\right|
$$
converges uniformly on compact subsets of the disk and the infinite product $\prod_{k=0}^{\infty} (F\circ\f_k)$ is uniformly convergent on compact subsets to some function $G$ analytic in $\D$. Moreover, since $\mathcal{P}$ is a compact class, $G\in\mathcal{P}$. Combining both limits, we obtain
$$
 T^n_{F,\f}f = F (F\circ\f) \ldots (F\circ\f_{n-1}) (f\circ\f_n) \to G
$$
uniformly on compact subsets as $n\to\infty$ for any $f\in\mathcal{P}$. Now we can see that $G$ is a fixed point of the transformation. Applying $T_{F,\f}$ to $G$ we have
\begin{align*}
 T_{F,\f}G & = F (G\circ\f) = F \left(\lim_{n\to\infty}\prod_{k=0}^{n-1} (F\circ\f_k)\right) \circ\f = F\left(\lim_{n\to\infty}\prod_{k=0}^{n-1} (F\circ\f_{k+1}) \right)
\\
 &=F\left(\lim_{n\to\infty}\prod_{k=1}^{n} (F\circ\f_{k})\right)= \lim_{n\to\infty}\prod_{k=0}^{n} (F\circ\f_{k})=G\,.
\end{align*}
Note that $G$ as constructed above does not depend on the initial choice of the function $f$ in $\cp$. Now it is clear that this $G$ is the only fixed point of $T_{F,\f}$, because if $g\in\mathcal{P}$ satisfies $T_{F,\f}g=g$, iterating the transformation we get
$$
 g=T^n_{F,\f}g\to G
$$
uniformly on compact subsets of $\D$.
\par
It is only left to check our final comment in the statement of the theorem. Since $\f$ is an inner function, by Proposition~\ref{prop-fi-inner} we have $F\equiv 1$ so the equation for the fixed point: $f\circ\f=f$ is the classical Shr\"oder equation for the composition operator $C_\f$ corresponding to the eigenvalue $\la=1$. By the first proposition from \cite[Section~6.1]{S}, if $f$ were not identically constant, it would follow that $|\f^\prime(0)|=1$. In view of the Schwarz lemma, this forces $\f$ to be a rotation, contrary to our assumptions.
\end{proof}
\par
The case when $\f$ is a rotation leads to the well-known case of fixed points from the theory of composition operators, describing a trichotomy: the identity map, a rational rotation or an irrational rotation.
\begin{prop} \label{prop-fixed-pts-rot}
Let $T_{F,\f}$ be a weighted composition transformation such that $T_{F,\f}(\mathcal{P})\subset\mathcal{P}$, where $F=\ell\circ \om$, $\om$ is a Schwarz-type functions, and $\f$ is a rotation. Then the set of all fixed points of $T_{F,\f}$ is as follows:
\par
(a) all of $\,\cp$, if $\f(z)\equiv z$;
\par
(b) the functions with $n$-fold symmetry: $f(z)=g(z^n)$, $g\in\cp$, whenever $\f(z)=\la z$, where $\la^n=1$ for some $n>1$;
\par
(c) only the constant function one, if $\f(z)=\la z$, where $|\la|=1$ and $\la^n\neq 1$ for all $n\in\N$.
\end{prop}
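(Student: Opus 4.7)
The plan is to reduce the problem, in each of the three cases, to analyzing the fixed points of the pure composition operator $f\mapsto f\circ\f$ on $\cp$, by first showing that $F$ must be identically one whenever $\f$ is a rotation. The reduction adapts the argument of Proposition~\ref{prop-no-mult}: since $\f(z)=\la z$ with $|\la|=1$ satisfies $|\f(z)|=|z|$, for each $z\in\D$ one can select $\mu\in\T$ so that $\ell_\mu\in\cl$ satisfies $|\ell_\mu(\f(z))|=(1+|z|)/(1-|z|)$, the extremal value in the growth theorem for $\cp$. Applying that same growth estimate to $T_{F,\f}(\ell_\mu)\in\cp$ yields $|F(z)|\le 1$ throughout $\D$, and combined with $F(0)=1$ the maximum modulus principle forces $F\equiv 1$. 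Hence $T_{F,\f}(f)=f\circ\f$ in every case.

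Once this reduction is in place, case (a) is immediate. For cases (b) and (c) I would pass to Taylor coefficients: writing $f=1+\sum_{k\ge 1}a_k z^k$, the equation $f(\la z)=f(z)$ is equivalent to $a_k(\la^k-1)=0$ for every $k\ge 1$. In case (b), where $\la$ is a primitive $n$-th root of unity, the surviving coefficients are precisely those with index divisible by $n$, so $f(z)=g(z^n)$ for some $g$ analytic in $\D$ with $g(0)=1$; conversely every such $f$ is plainly fixed. A short verification then confirms that $g\in\cp$ if and only if $f\in\cp$, using that for every $w\in\D$ any $n$-th root $z=w^{1/n}$ lies in $\D$ and yields $g(w)=f(z)$. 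In case (c) the non-resonance $\la^k\ne 1$ for all $k\ge 1$ forces every $a_k$ to vanish, so only the constant function one remains.

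The only substantive step is the rotation-to-identity reduction for $F$; after that, everything reduces to elementary coefficient analysis and direct verification, so no real obstacle is anticipated.
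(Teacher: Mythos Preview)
Your proposal is correct and follows the same overall architecture as the paper: first force $F\equiv 1$, then analyze the Taylor coefficients of $f(\la z)=f(z)$. The only difference lies in the reduction step. The paper simply observes that a rotation is inner and invokes Proposition~\ref{prop-fi-inner} (whose proof passes to radial limits in the criterion \eqref{key-modif} of Theorem~\ref{thm-main}), whereas you reprove $F\equiv 1$ from scratch by adapting the growth-theorem argument of Proposition~\ref{prop-no-mult}. Your route is more self-contained and avoids both the main theorem and any boundary-value considerations, at the modest cost of repeating an argument already available in a stronger form. The coefficient analysis for (b) and (c), including the check that $g\in\cp$ if and only if $f(z)=g(z^n)\in\cp$, matches what the paper leaves to the reader.
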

\begin{proof}
Since $\f$ is an inner function, Proposition~\ref{prop-fi-inner} forces $F\equiv 1$, hence $T_{F,\f}f=f\circ\f$. Part (a) now follows trivially.
\par
Parts (b) and (c) follow readily by comparing the Taylor series of both sides of the equality $f(\la z)=f(z)$ in the disk.
\end{proof}


\end{document}